\documentclass{amsart}

\usepackage{amsmath,amssymb,amsthm}
\usepackage{graphicx}
\usepackage{xcolor}
\usepackage{colortbl}
\usepackage{arydshln}
\usepackage{microtype}
\usepackage{hyperref}
\usepackage[numbers,sort&compress]{natbib}

\newtheorem{theorem}{Theorem}[section]
\newtheorem{lemma}[theorem]{Lemma}
\newtheorem{corollary}[theorem]{Corollary}

\theoremstyle{definition}
\newtheorem{definition}[theorem]{Definition}
\newtheorem{example}[theorem]{Example}

\numberwithin{equation}{section}

\begin{document}

\title{Trident Tableaux for Tree-Child Networks with One Reticulation Node:
A Bijection with Two-Wall Tableaux}

\author{Hexuan Liu}

\thanks{Department of Pure and Applied Mathematics, Waseda University,
3-4-1 Okubo, Shinjuku-ku, Tokyo 169-8555, Japan.
E-mail: lhx@ruri.waseda.jp.}

\thanks{The author is supported by JSPS KAKENHI Grant Number JP25KJ2168.}

\begin{abstract}
Motivated by a word encoding of tree-child networks, we introduce trident
tableaux, which are Young tableaux with a unique three-cell column satisfying certain conditions on successors. We construct a bijection between trident tableaux with $n+1$ columns and two-wall tableaux with $n$ columns, that is, two-row fillings with two designated columns in which vertical order is not imposed. The bijection matches the three-part decompositions of the two classes and shows that each class has cardinality $n(n+1)C_n/2$, where $C_n$ is the $n$-th Catalan number. In particular, it gives a trident-tableau interpretation of a shifted form of OEIS A002457. As a consequence, we obtain an exact enumeration of tree-child networks with one reticulation node that contain a trident, and show that their proportion among all tree-child networks with one reticulation node tends to $1/8$ as the number of leaves tends to infinity.
\end{abstract}

\maketitle
\pagestyle{plain}

\smallskip
\noindent\textbf{Keywords.}
Young tableaux, bijections, Catalan numbers, Chung--Feller theorem,
phylogenetic networks.

\section{Introduction}

Phylogenetic networks are used to represent evolutionary histories including hybridization events and gene transfer events that cannot be described by phylogenetic trees. Tree-child networks form one of the most well-studied subclasses of phylogenetic networks, from both algorithmic and enumerative points of view \cite{CardonaRosselloValiente2009,CardonaZhang2020}.

Throughout, phylogenetic networks are binary. A \emph{phylogenetic network} is a simple
directed acyclic graph with a unique root of indegree $0$ and outdegree $1$,
and leaves of indegree $1$ and outdegree $0$ that are bijectively labelled
by a finite set. For a directed edge $(u,v)$, we call $u$ a parent of $v$ and $v$ a child of $u$.
 Every other vertex is either a \emph{tree node}, of
indegree $1$ and outdegree $2$, or a \emph{reticulation node}, of indegree
$2$ and outdegree $1$. A phylogenetic network $N$ is a \emph{tree-child network} if every
non-leaf vertex in $N$ has a child that is not a reticulation node. Phylogenetic networks are
considered up to directed graph isomorphism preserving the root and the leaf
labels. 

For ranked tree-child networks, the notion of a trident, namely, the configuration consisting of a reticulation node whose child is a leaf and whose two parents each have a leaf as their other child, was introduced and studied in \cite{BienvenuLambertSteel2022,FuchsLiuYu2024}. We use the same definition of a trident for the tree-child networks considered here.

The tableau class introduced below is motivated by the binary case of the word encoding of tree-child networks in \cite{ChangFuchsLiuWallnerYu2024}. For a tree-child network with $n+1$ leaves and one reticulation node, the word produced by this encoding corresponds to a Young tableau with $n$ columns obtained from two rows, each with $n$ cells, by adding one cell below one of the columns \cite[Remark~3.3]{ChangFuchsLiuWallnerYu2024}. The leaf labels are recorded by a separate permutation and do not affect the associated tableau; the correspondence is described in detail in Section~\ref{sec:network-tableau}.

Accordingly, for $n\geq2$, let $\mathcal Y_n$ be the set of Young tableaux consisting of a top row and a middle row, each with $n$ cells, together with one additional cell below one of the columns. The cells are filled with $1,2,\ldots,2n+1$, with entries increasing from left to right along each row and from bottom to top along each column.

\begin{definition}
For an element of $\mathcal Y_n$, let $k<j<i$ be the entries in the unique three-cell column, read from bottom to top. For any entry other than the largest entry, we call the next integer its \emph{successor}. We call this column a \emph{trident column} if $k+1$ and $j+1$ lie in the top row, and either $i+1$ lies in the top row or $i$ is the largest entry. We refer to these requirements
collectively as the \emph{trident condition}.

An element of $\mathcal Y_n$ whose unique three-cell column is a trident column is called a \emph{trident tableau}. Let $\mathcal A_n$ denote the set of trident tableaux in $\mathcal Y_n$, and let $a_n=|\mathcal A_n|$.
\end{definition}

See Figure~\ref{fig:trident-tableaux} for two examples of trident tableaux and their trident columns.

\begin{figure}[ht]
    \centering
    \includegraphics[width=0.75\textwidth]{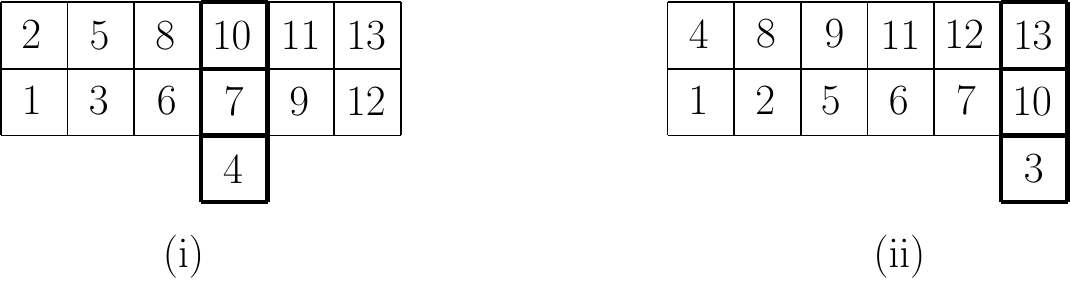}
    \caption{Two examples of trident tableaux. The bold column in each tableau is the trident column. In (i), $i+1$ lies in the top row, while in (ii), $i$ is the largest entry.}
    \label{fig:trident-tableaux}
\end{figure}

We next introduce the second tableau class used in our bijection. We follow the wall terminology of \cite{BanderierWallner2021}, with the usual vertical order suspended at a wall.

\begin{definition}
For $n\geq2$, a \emph{two-wall tableau} is a two-row filling with $n$ columns in which two distinct columns are designated as wall columns. The cells are filled with $1,2,\ldots,2n$. The entries increase from left to right along each row and from bottom to top in every column without a wall, while no vertical order is imposed in a wall column. Let $\mathcal B_n$ denote the set of two-wall tableaux with $n$ columns, and let $b_n=|\mathcal B_n|$.
\end{definition}

In Figure~\ref{fig:two-wall-tableaux}, tableaux (i) and (ii) correspond, under the bijection of Theorem~\ref{thm:main-bijection}, to tableaux (i) and (ii), respectively, in Figure~\ref{fig:trident-tableaux}.

\begin{figure}[ht]
    \centering
    \includegraphics[width=0.8\textwidth]{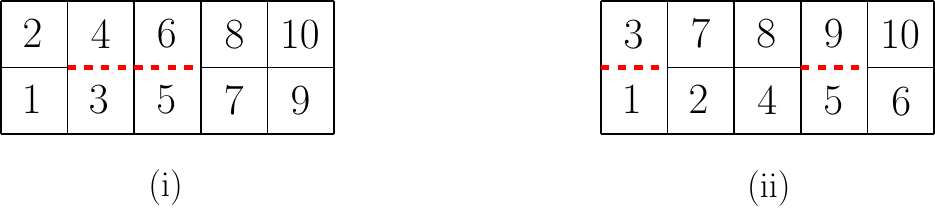}
    \caption{Two examples of two-wall tableaux corresponding to the
trident tableaux in Figure~\ref{fig:trident-tableaux}. The thick red dashed
horizontal lines indicate the walls.}
    \label{fig:two-wall-tableaux}
\end{figure}

Up to transposition, $\mathcal B_n$ is the case of two walls in the family studied in \cite{BanderierWallner2021}. In particular, \cite[Proposition~2.1]{BanderierWallner2021} gives $b_n=\binom{n+1}{2}C_n$, where $C_n$ is the $n$-th Catalan number. The class $\mathcal Y_n$ has been studied further in \cite{LiuWallnerYu2026}, including the limiting distributions of the column position of the bottom cell and the
entry it contains.

In this paper, Theorem~\ref{thm:main-bijection} gives an explicit bijection
between $\mathcal B_n$ and $\mathcal A_{n+1}$ that matches the three-part
decompositions of the two classes. The construction combines a Chung--Feller encoding of two-wall tableaux with an insertion that produces the trident column. Consequently,
$a_n=\binom{n}{2}C_{n-1}$ for $n\geq3$. Since $\mathcal A_2$ consists of a
single trident tableau, the formula holds for every $n\geq2$. The ordinary
generating function of $(a_n)_{n\geq2}$ is therefore
\[
A(z)=\sum_{n\geq2}a_nz^n
=\frac{z^2}{(1-4z)^{3/2}}
=z^2+6z^3+30z^4+140z^5+630z^6+2772z^7+\cdots.
\]
Thus, the relation $a_{n+1}=b_n$ identifies $(a_n)_{n\geq2}$ with a shifted
form of \href{https://oeis.org/A002457}{OEIS A002457}.

In \cite{BienvenuLambertSteel2022,FuchsLiuYu2024}, the expectation and asymptotic distribution of the number of tridents in a uniformly random ranked tree-child network are studied, where a ranking is a compatible ordering of the events of the network. Here we consider tree-child networks themselves, without adding such a ranking. Combining our bijection with the network--tableau correspondence of Section~\ref{sec:network-tableau} gives an exact enumeration of the tree-child networks with one reticulation node that contain a trident. For $m\geq3$, the number of such networks with $m$ labelled leaves is
\[
3\binom{m}{3}(2m-5)!!.
\]
Moreover, their proportion among all tree-child networks with $m$ labelled leaves and one reticulation node tends to $1/8$ as $m\to\infty$; see Corollary~\ref{cor:network-count}. 

Compared with the cited results on $\mathcal B_n$, $\mathcal Y_n$, and tridents in ranked tree-child networks, the new contributions are the trident subclass $\mathcal A_n$ and a decomposition-preserving bijection, which yields the
exact enumeration and limiting proportion above.

\section{The network--tableau correspondence}
\label{sec:network-tableau}

We now describe the correspondence between tree-child networks and tableaux used in this paper.

\subsection{The encoding}

A tree node is called \emph{free} if neither of its children is a reticulation node, and its two outgoing edges are called \emph{free edges}. We recall the binary case of the encoding in \cite[Theorem~3.4]{ChangFuchsLiuWallnerYu2024}, together with the reading convention used in its proof. Let $N$ be a tree-child network with $n+1$ leaves and one reticulation node. For each free tree node, choose one free edge, subdivide both free edges, and add a directed edge from the new vertex on the chosen edge to the other new vertex. The latter vertex thereby becomes a reticulation node. For any such collection of choices, let $N'$ denote the resulting network.

The construction from $N$ to $N'$ inserts vertices only on free edges. Since each parent of the reticulation node of $N$ has the reticulation node as a child, neither parent is free, and the reticulation node itself is not a tree node. Hence no vertex is inserted on any of the outgoing edges of these three vertices.

In $N'$, every tree node has exactly one reticulation node as a child. Hence $N'$ has a unique decomposition into \emph{path-components}, that is, maximal directed paths starting at the root or a reticulation node, ending at a leaf, and whose internal vertices are tree nodes. The incoming edges of the reticulation nodes are not contained in any path-component; they connect different path-components.

The path-component of $N'$ containing the root is assigned index $0$. Inductively, consider the unindexed path-components whose initial reticulation node has both parents in already indexed path-components. Order these path-components by the maximum of the indices of the two path-components containing the parents of their initial reticulation nodes, in increasing order. For those with the same maximum index $r$, read the path-component indexed by $r$ from its starting vertex towards its leaf. For each, take the last parent of its initial reticulation node encountered on this path-component, and order the path-components according to the order in which these parents are read. Assign consecutive indices to these path-components according to this order. Repeat until all path-components have been indexed.

Assign a distinct letter to each non-root path-component, and label its initial reticulation node and both parents with this letter. The letter assigned to the path-component with index $i$ corresponds to the $i$-th column of the associated tableau. For a reticulation node of $N'$ introduced by the construction at a free tree node of $N$, its two parents are the original tree node and the new tree node inserted on its chosen free edge. Hence these two vertices carry the same letter and are consecutive on a path-component; they are read as a single occurrence.

Reading these letters along the path-components in increasing order of their indices, each path-component from its starting vertex towards its leaf, gives a word. The leaves do not contribute letters to the word; their labels are read separately, in the same order, to give a permutation. The resulting word uses $n$ distinct letters: the letter associated with the reticulation node of $N$ occurs three times, while each of the remaining $n-1$ letters occurs twice \cite[Definition~3.1]{ChangFuchsLiuWallnerYu2024}.

If a letter occurring twice appears at positions $p_1<p_2$ in the word, then $p_1$ and $p_2$ are respectively the middle and top entries of its column in the associated tableau. If the letter occurring three times appears at positions $p_1<p_2<p_3$, then $p_1,p_2,p_3$ are respectively the bottom, middle, and top entries of the three-cell column \cite[Remark~3.3]{ChangFuchsLiuWallnerYu2024}.

Combining this encoding with the tableau correspondence above, \cite[Theorem~3.4]{ChangFuchsLiuWallnerYu2024} specializes to a bijection between tree-child networks with $n+1$ leaves and one reticulation node together with a choice of one free edge at each free tree node, and tableaux in $\mathcal Y_n$ together with an ordering of the $n+1$ leaf labels.

\subsection{The trident condition}

Every non-root path-component of $N'$ begins with a reticulation node. For the letter associated with such a path-component, all of its other occurrences arise from parents of that reticulation node, which lie in path-components of smaller index and are therefore read earlier. Thus the occurrence at the initial reticulation node is the final occurrence of its letter. Since the top entry of each column corresponds to the final occurrence of its letter, the top-row entries are precisely the positions in the word at which
non-root path-components begin.

\begin{lemma}\label{lem:trident-network}

Let $N$ be a tree-child network with one reticulation node, and let $T$ be
any Young tableau associated with $N$ under the encoding above. Then $N$
contains a trident if and only if the unique three-cell column of $T$ is a
trident column.

\end{lemma}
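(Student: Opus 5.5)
The approach is to translate each clause of the trident condition into a statement about the word produced by the encoding, and then into a statement about $N$. Let $r$ be the reticulation node of $N$, with parents $u,w$ and child $c$. Let $x$ be the letter assigned to the path-component of $N'$ starting at $r$. Its three occurrences, at positions $k<j<i$, come from $u$, $w$ (in some order) and $r$. By the remark preceding the lemma, $r$ is the final occurrence, so it sits at position $i$; the parents sit at positions $k$ and $j$. As noted in Section~\ref{sec:network-tableau}, no vertex of $N'$ is inserted on the outgoing edges of $u$, $w$, or $r$. So in $N'$ the other child of $u$, the other child of $w$, and the child $c$ of $r$ are the same vertices as in $N$.

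The key step is a local lemma. Let $v$ be a vertex of $N'$ carrying a letter at position $p$, and let $v$ lie on a path-component $P$. I claim that position $p+1$ lies in the top row if and only if the next vertex of $P$ after $v$ is a leaf (or $v$ is the last lettered vertex read).

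To see this, read along $P$ from $v$. If the next vertex is a tree node, it carries a letter, and that occurrence is not at the start of a path-component. Its position $p+1$ is therefore not a top-row entry, using the characterisation of top-row entries as starting positions of non-root path-components. One subtlety: two consecutive vertices sharing a letter are read as one occurrence. I must check that this merging does not affect $u$, $w$, $r$. It does not, because such merged pairs arise only at free tree nodes of $N$.

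If instead the next vertex is a leaf, $P$ ends. The next letter read is then the initial vertex of the path-component with the next index, which is a reticulation node, so $p+1$ is a top-row entry. If no further path-component exists, $p$ is the largest entry.

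I also need to check the case $v$ a parent: its successor along $P$ is its other child, since the edge to $r$ leaves $P$. For $v=r$, the successor is $c$.

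With the local lemma in hand, the assembly is direct:
\begin{itemize}
\item The condition that $k+1$ and $j+1$ lie in the top row says that $u$ and $w$ each have a leaf as their other child.
\item The condition that $i+1$ lies in the top row, or that $i$ is the largest entry, says that $c$ is a leaf.
\end{itemize}
Together these are exactly the definition of a trident at $r$. A trident must involve the unique reticulation node of $N$, so this gives the equivalence for every choice of free edges, that is, for every associated $T$.

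The main obstacle I expect is justifying the step ``the next letter read after a path-component ends is the start of a new component''. This requires that the reading order of the word (path-components in increasing index) coincide with the concatenation used in the local lemma. One boundary case needs care: the last component ends with a leaf and there is no successor. This corresponds to the ``largest entry'' alternative, and it can only occur for $r$, because $u$ and $w$ are read before $r$. I would also double-check that the top-row characterisation stated just before the lemma really excludes the root component's start, which carries no letter.
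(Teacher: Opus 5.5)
Your proposal is correct and follows essentially the same route as the paper. Both arguments identify $i$ with the occurrence at the reticulation node and $k,j$ with its parents. Both then observe that the next entry in the word lies in the top row (or does not exist) exactly when the current path-component ends at a leaf, using $k,j<i$ to rule out the ``largest entry'' case for the parents. Your extra checks, on merged occurrences and on the children of $u,w,r$ being unchanged in $N'$, appear in the paper in the remarks just before the lemma and in the choice-independence paragraph after it, rather than inside the proof.
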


\begin{proof}

Let $k<j<i$ be the entries in the three-cell column. The entries $k$ and $j$
correspond to the occurrences of the letter associated with the reticulation
node at its two parents, while $i$ corresponds to the occurrence of that letter
at the reticulation node itself.

Consider either parent of the reticulation node. Since one of its children is the reticulation node, the tree-child property implies that its other child is either a leaf or a tree node. If the other child is a leaf, no further internal vertex is read in the current path-component after the occurrence at this parent. Since $k,j<i$, this occurrence is not the last entry of the word, so the next entry is the occurrence beginning the next path-component and hence lies in the top row by the observation above. If the other child is a tree node, the reading continues in the same path-component. The next entry is then not an occurrence beginning a path-component and therefore does not lie in the top row. Hence $k+1$ and $j+1$ lie in the top row precisely when the two parents each have a leaf as their other child.

Now consider the reticulation node itself. By the tree-child property, its unique child is either a leaf or a tree node. If it is a leaf, no further internal vertex is read in the current path-component after the reticulation node. If another path-component follows, then $i+1$ is the occurrence beginning that path-component and lies in the top row; if the current path-component is the last one, then $i$ is the largest entry. If the child is a tree node, the reading continues in the same path-component, so $i+1$ does not lie in the top row and $i$ is not the largest entry. Therefore, $N$ contains a trident if and only if the three-cell column is a trident column. 
\end{proof}

\medskip
\noindent\textbf{Choice-independence of the trident condition.}

Although the associated tableau may depend on the choices made at free tree nodes, whether its unique three-cell column is a trident column is independent of these choices. As noted above, neither parent of the reticulation node in $N$ is free, and the reticulation node itself is not a tree node, so no vertex is inserted on any outgoing edge of these three vertices when constructing $N'$. Thus, after the occurrence at either parent, the reading of $N'$ continues in the same path-component if and only if the other child of that parent is a tree node in $N$. Similarly, after the occurrence at the reticulation node, the reading continues in the same path-component if and only if its child is a tree node in $N$.

Different choices at free tree nodes may change the resulting network $N'$, and hence its path-component decomposition and the associated reading. In particular, when the child of the reticulation node is a leaf, they may determine whether the path-component containing the reticulation node is the last one read. If it is not, then $i+1$ lies in the top row; if it is, then $i$ is the largest entry. These are precisely the two alternatives allowed in the definition of a trident column. Thus, although the associated tableau may vary with the choices, the trident-column property depends only on the corresponding local configuration in $N$.

The local correspondence between a trident and its trident column is illustrated in Figure~\ref{fig:trident-correspondence}.

\begin{figure}[ht]     \centering     \includegraphics[width=0.95\textwidth]{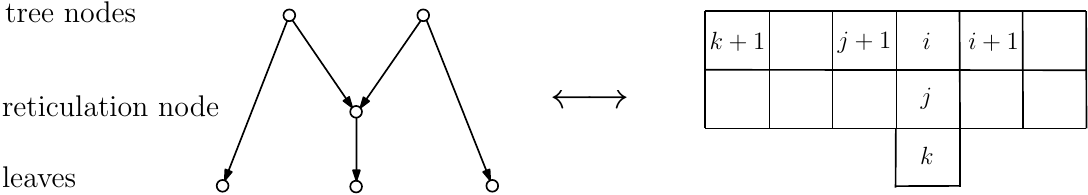}     

\caption{A trident in a tree-child network and its corresponding trident column. The letter associated with the reticulation node is read at its two parents and at the reticulation node itself, giving the three entries $k<j<i$ in the same column. Here the case in which $i+1$ lies in the top row is shown.}    

\label{fig:trident-correspondence} 

\end{figure}

\section{A decomposition of trident tableaux} \label{sec:trident-decomposition}

Throughout, column indices are counted from left to right. Let $A\in \mathcal A_n$, and let $c$ be the index of its trident column. Let $k<j<i$ be the entries in this column, read from bottom to top. Since $k+1$ and $j+1$ lie in the top row, let $q$ and $s$ be their respective column indices. Since $k+1<j+1$ and the top row is increasing, we have $q<s$.

The following lemma describes the possible positions of $q$, $s$, and $c$.

\begin{lemma} \label{lem:trident-decomposition} With the notation above, either $q<s<c$, or $q<s=c=n$. \end{lemma}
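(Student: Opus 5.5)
We argue by comparing entries of the top row, which is increasing from left to right, so column indices of top-row cells are ordered exactly as their entries. The plan is first to locate $k+1$ and $j+1$ relative to $i$, which gives $q<s\le c$. Then we rule out $s=c$ unless $c=n$, using the trident condition on $i$.

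\emph{Step 1: $s\le c$, with $s=c$ exactly when $j+1=i$.} Since $k+1$ lies in the top row and $j$ lies in the middle row, $k+1\neq j$; together with $k<j$ this gives $k+1<j$, although only $q<s$ is needed (already noted). For $s$, both $j+1$ and $i$ lie in the top row, and $j+1\le i$ because $j<i$. If $j+1<i$, the top row is increasing and $i$ sits in column $c$, so $s<c$ and we are in the first alternative. Otherwise $j+1=i$, hence $s=c$.

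\emph{Step 2: if $s=c$, then $c=n$.} Suppose $j+1=i$ and, for contradiction, $c<n$. Column $c+1$ is a two-cell column; call its middle entry $m$ and its top entry $t$. By row monotonicity, $t>i$ and $m>j$.

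By the trident condition, either $i$ is the largest entry or $i+1$ lies in the top row. The first option is impossible, since $t>i$. So $i+1$ lies in the top row. It is the smallest top-row entry exceeding $i$, so it must occupy column $c+1$, giving $t=i+1$.

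Column monotonicity then gives $j<m<i+1$, so $m\in\{j+1\}=\{i\}$. But $i$ already occupies the top cell of column $c$, so $m$ cannot equal $i$. This contradiction shows $c=n$.

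The only delicate point is Step 2: it requires identifying $t=i+1$ from the trident condition and then squeezing $m$ between consecutive integers. The rest is direct row monotonicity.
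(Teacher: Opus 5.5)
Your proof is correct and follows the paper's argument; in particular, Step 2 is the paper's argument verbatim in substance, squeezing the middle entry of column $c+1$ strictly between $j=i-1$ and $i+1$ and forcing it to equal $i$. The only difference is in Step 1: you get $s\le c$ directly from $j+1\le i$ and the top row being increasing, whereas the paper rules out $s>c$ by noting that the middle entry of column $s$ would have to lie strictly between $j$ and $j+1$. Both are one-line arguments, and yours has the small advantage of showing at once that $s=c$ exactly when $j+1=i$.
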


\begin{proof} Suppose first that $s>c$. The middle entry in the $s$-th column is larger than $j$, since the middle row is increasing, but smaller than its top entry $j+1$. This is impossible. Hence $s\leq c$.

Suppose that $s=c<n$. Then the top entry in the trident column is $i=j+1$. Since $c<n$, the entry $i$ is not the largest, so the trident condition implies that $i+1$ lies in the top row, and hence it is the top entry in the $(c+1)$-th column. The middle entry in the $(c+1)$-th column is larger than $j=i-1$ but smaller than $i+1$, so it must be $i$, which has already been used in the trident column. This is impossible. Therefore, $s=c$ can occur only when $c=n$. \end{proof}

We now divide $\mathcal A_n$ into three subclasses: 
\[ 
\begin{aligned} \mathcal A_n^{(0)}    &=\{A\in \mathcal A_n:q<s<c\},\\ \mathcal A_n^{(1)}    &=\{A\in \mathcal A_n:q<s=c=n,\ q<n-1\},\\ \mathcal A_n^{(2)}    &=\{A\in \mathcal A_n:q=n-1,\ s=c=n\}. 
\end{aligned} 
\] 

By Lemma~\ref{lem:trident-decomposition}, $\mathcal A_n$ is the disjoint union of $\mathcal A_n^{(0)}$, $\mathcal A_n^{(1)}$, and $\mathcal A_n^{(2)}$. For $e\in\{0,1,2\}$, let $a_n^{(e)}=|\mathcal A_n^{(e)}|$ denote the cardinality of $\mathcal A_n^{(e)}$.

In $\mathcal A_n^{(0)}$, the entries $k+1$ and $j+1$ both lie strictly to the left of the trident column. 

In $\mathcal A_n^{(1)}$ and $\mathcal A_n^{(2)}$, the trident column is the last column and $i=j+1=2n+1$. The two cases are distinguished according as $k+1$ lies strictly to the left of the top cell in the $(n-1)$-th column or is itself the top entry in the $(n-1)$-th column.

\section{A decomposition of two-wall tableaux} \label{sec:two-wall-tableaux}

Recall that $\mathcal B_n$ is the set of two-wall tableaux with $n$ columns. We call a column \emph{increasing} if its top entry is larger than its bottom entry, and \emph{decreasing} if its top entry is smaller than its bottom entry.

For $B\in \mathcal B_n$, read the entries $1,2,\ldots,2n$ in increasing order, writing $X$ for an entry in the bottom row and $Y$ for an entry in the top row. This gives a \emph{balanced word}, that is, a word with the same number of occurrences of $X$ and $Y$. Number the occurrences from left to right as $X_1,\ldots,X_n$ and $Y_1,\ldots,Y_n$. Since both rows are increasing, $X_r$ and $Y_r$ are respectively the bottom and top entries in the $r$-th column. Thus, the balanced word determines the two-row filling, and a two-wall tableau is determined by its balanced word together with its two wall columns. In particular, the $r$-th column is decreasing precisely when $Y_r$ occurs before $X_r$. Figure~\ref{fig:two-wall-example} illustrates this encoding.

\begin{figure}[ht]     \centering     \includegraphics[width=0.6\textwidth]{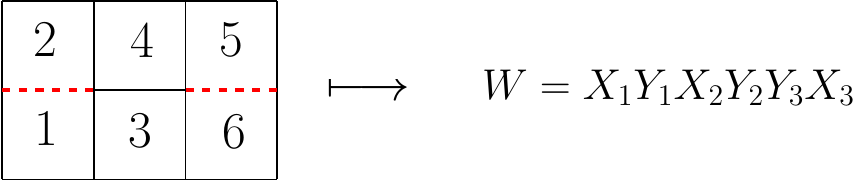}     
\caption{A two-wall tableau in $\mathcal B_3$ with walls in the first and
third columns, indicated by thick red dashed horizontal lines. The third
column is decreasing, since $Y_3$ occurs before $X_3$ in the corresponding
balanced word.}  \label{fig:two-wall-example} \end{figure}

Regarding $X$ as an up-step and $Y$ as a down-step, starting at height $0$, we obtain a lattice path; the step $Y_r$ ends below the horizontal axis precisely when $Y_r$ occurs before $X_r$. Thus, the number of decreasing columns is the Chung--Feller flaw statistic of the corresponding balanced word, namely, the number of indices $r$ for which $Y_r$ occurs before $X_r$; we call each such index a \emph{flaw}. Since every column without a wall must be increasing, any decreasing column is a wall column. Hence at most two columns are decreasing.

For $e\in\{0,1,2\}$, let $\mathcal B_n^{(e)}$ be the subset of $\mathcal B_n$ having exactly $e$ decreasing wall columns and let $b_n^{(e)}=|\mathcal B_n^{(e)}|$ denote the cardinality of $\mathcal B_n^{(e)}$. The set $\mathcal B_n$ is the disjoint union of $\mathcal B_n^{(0)}$, $\mathcal B_n^{(1)}$, and $\mathcal B_n^{(2)}$.

By the Chung--Feller theorem \cite{ChungFeller1949}, the number of balanced words with $n$ occurrences of each letter and exactly $e$ flaws is $C_n$. For a fixed such word, the decreasing columns are already determined, and each of them must be a wall column. If $e=0$, there is no such restriction, so the two walls can be placed in any two of the $n$ columns. If $e=1$, the unique decreasing column must be one wall, and the other wall can be placed in any of the remaining $n-1$ columns. If $e=2$, the two decreasing columns must be exactly the two wall columns. Hence 
\[
b_n^{(0)}=\binom{n}{2}C_n,\qquad
b_n^{(1)}=(n-1)C_n,\qquad
b_n^{(2)}=C_n.
\]

Summing the three cardinalities gives $b_n=\binom{n+1}{2}C_n$, in agreement
with \cite[Proposition~2.1]{BanderierWallner2021}. These three subclasses will
be matched with $\mathcal A_{n+1}^{(0)}$, $\mathcal A_{n+1}^{(1)}$, and $\mathcal A_{n+1}^{(2)}$ in Theorem~\ref{thm:main-bijection}.

\section{The bijection} \label{sec:bijection}

\subsection{The Chung--Feller encoding}

Let $\mathcal{D}_n$ be the set of \emph{Dyck words} with $n$ occurrences of $X$ and $n$ occurrences of $Y$, that is, words in which every prefix contains at least as many $X$'s as $Y$'s. We first encode a two-wall tableau in $\mathcal B_n$ by an element of \[ \mathcal{E}_n  =\{(P,q,s):P\in\mathcal{D}_n,\ 1\leq q<s\leq n+1\}. \]
We use the Chung--Feller map to turn the balanced word into a Dyck word, with the number of decreasing columns encoded by the pair $(q,s)$.

For any balanced word, define the \emph{height} of a prefix to be the number of $X$'s minus the number of $Y$'s in that prefix. For a balanced word $U$ with $n$ occurrences of each letter, let $d_-(U)$ denote its number of flaws as defined in Section~\ref{sec:two-wall-tableaux}.
We use the following standard block-exchange form of the Chung--Feller
bijection \cite{Chen2008}. If $d_-(U)<n$, write
\[
U=\alpha X\beta Y\gamma,
\qquad
f(U)=\beta Y\alpha X\gamma,
\]
where the displayed $X$ is the first $X$ that changes the height from $0$
to $1$, and the displayed $Y$ is the first subsequent $Y$ that returns
the height to $0$. Then $d_-(f(U))=d_-(U)+1$.
Conversely, if $d_-(U)>0$, write
$U=\beta Y\alpha X\gamma$, where the displayed $Y$ is the first $Y$ that
changes the height from $0$ to $-1$, and the displayed $X$ is the first
subsequent $X$ that returns the height to $0$, and define
$f^{-1}(U)=\alpha X\beta Y\gamma$.
These two operations are mutually
inverse. Thus, for $0\leq e<n$, the map $f$ is a bijection from the
balanced words $U$ with $d_-(U)=e$ to those with $d_-(U)=e+1$.
Here $f^2$ and $f^{-2}$ denote two successive applications of $f$ and
$f^{-1}$, respectively.

\medskip
\noindent\textbf{Forward map.} Let $B\in \mathcal B_n$, and let $W$ be its balanced word. We define $(P,q,s)\in\mathcal{E}_n$ according to the number of decreasing wall columns.

\medskip \noindent\emph{No decreasing wall columns.} If $B\in \mathcal B_n^{(0)}$, let $q<s$ be the indices of the two wall columns and set $P=W$.

\medskip \noindent\emph{One decreasing wall column.} If $B\in \mathcal B_n^{(1)}$, let $v$ be the index of the unique decreasing wall column and let $r$ be the index of the other wall column. Set $P=f^{-1}(W)$ and $s=n+1$, and define $q=r$ if $r<v$, and $q=r-1$ if $r>v$.
Thus, after the $v$-th column has been removed, the other wall column is the $q$-th column in the remaining list.

\medskip \noindent\emph{Two decreasing wall columns.} If $B\in \mathcal B_n^{(2)}$, set $P=f^{-2}(W)$ and $(q,s)=(n,n+1)$.

\medskip
Since the remaining list of columns in the second case has length $n-1$, we have $q\leq n-1$, so this case cannot produce the pair $(n,n+1)$. The three cases correspond respectively to $q<s\leq n$, $q<n<s=n+1$, and $(q,s)=(n,n+1)$, and therefore exhaust $\mathcal{E}_n$.

For example, the tableau in Figure~\ref{fig:two-wall-example} has one decreasing wall column. The unique decreasing column is the third column, while the other wall is in the first column. Since $W=XYXYYX$, we obtain $P=f^{-1}(W)=XXYXYY$ and $(q,s)=(1,4)$.

\medskip
\noindent\textbf{Inverse of the encoding.} Let $(P,q,s)\in\mathcal{E}_n$. 

If $s\leq n$, take $W=P$ and place the walls in the $q$-th and $s$-th columns.

If $s=n+1$ and $q<n$, take $W=f(P)$ and let $v$ be the index of its unique flaw. Place one wall in the $v$-th column, and place the other wall in the $r$-th column, where $r=q$ if $q<v$, and $r=q+1$ if $q\geq v$.

Finally, if $(q,s)=(n,n+1)$, take $W=f^2(P)$ and let $v_1<v_2$ be the indices of its two flaws. Place the walls in the $v_1$-th and $v_2$-th columns. This construction gives a bijection between $\mathcal B_n$ and $\mathcal{E}_n$.

\subsection{The insertion and its inverse}

For $T\in\mathcal Y_{n+1}$, its \emph{row word} is obtained by reading the entries in increasing order and writing $X$, $Y$, or $\ast$ according as the entry lies in the middle, top, or bottom row. Number the occurrences of $X$ and $Y$ in the row word from left to right
as $X_1,X_2,\ldots$ and $Y_1,Y_2,\ldots$, respectively. By construction, the symbol corresponding to entry $r$ occupies the $r$-th position in the row word. If the $c$-th column is the three-cell column, its middle and top entries are represented by $X_c$ and $Y_c$, respectively. For $T$ to be a trident tableau, $\ast$ and $X_c$ each must be
immediately followed by an occurrence of $Y$, while $Y_c$ must either be
immediately followed by another occurrence of $Y$ or be the final symbol.

We next construct a trident tableau in $\mathcal A_{n+1}$ from $(P,q,s)\in\mathcal E_n$. For $1\leq r\leq n$, let $G_r$ denote the gap immediately before the $r$-th occurrence of $Y$ in $P$, and let $G_{n+1}$ denote the gap at the end of $P$. Throughout the construction, all gaps refer to the original word $P$. Figure~\ref{fig:bijection-example} illustrates the gaps and the three
insertion positions in one example.

\begin{figure}[ht]
    \centering
    \includegraphics[width=0.95 \textwidth]{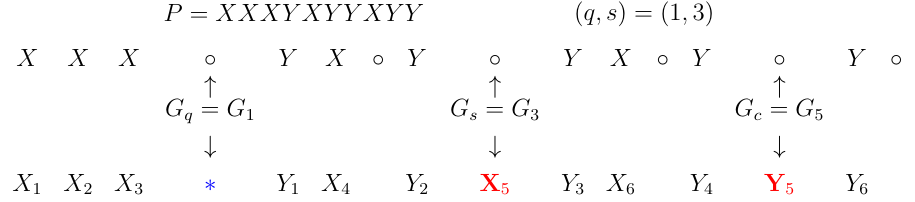}
    \caption{Insertion for
$P=XXXYXYYXYY$ and $(q,s)=(1,3)$.
The open circles indicate the gaps of $P$.
Here $c=5$, so the three insertion positions are $G_1$, $G_3$, and $G_5$;
the new occurrences of $X$ and $Y$ become $X_5$ and $Y_5$, respectively.
The inserted symbols are distinguished from the original word by both color
and typeface.}
    \label{fig:bijection-example}
\end{figure}

\noindent\textbf{The insertion.} Insert a symbol $\ast$ in $G_q$ and a new occurrence of $X$ in $G_s$. Let $c$ be the index of this new $X$ among all occurrences of $X$ in the resulting word. The resulting three-cell column will be the $c$-th column.
If $s\leq n$, then, since $P$ is a Dyck word, the $s$-th occurrence of $Y$ is preceded by at least $s$ occurrences of $X$. Hence the new $X$ has index $c>s$. If $s=n+1$, then $c=n+1$. We now insert a new occurrence of $Y$ in $G_c$, placing it after the new $X$ when $s=c=n+1$. In particular, if $s=n+1$, the newly inserted $X$ and $Y$ are the final two symbols of the resulting word. With the occurrences of $X$ and $Y$ numbered from left to right, the new $X$ and $Y$ are $X_c$ and $Y_c$, respectively, and hence occupy the same column.

Label the symbols of the resulting word by $1,2,\ldots,2n+3$ from left to right. Place the labels of the $X$'s in the middle row, the labels of the $Y$'s in the top row, and the label of $\ast$ in the bottom cell of the $c$-th column, and let $A$ denote the resulting tableau. Since the labels increase with the order of the symbols in the word, $X_r$ and $Y_r$ are respectively the middle and top entries in the
$r$-th column.

\medskip
\noindent\emph{Verification.}
Deleting $\ast$ from the resulting word leaves a Dyck word. Indeed, if $s\leq n$, inserting the new $X$ before the new $Y$ does not decrease the number of $X$'s relative to $Y$'s in any prefix. If $s=n+1$, the newly inserted $X$ and $Y$ are the final two symbols after $\ast$ is deleted. Thus every prefix still contains at least as many $X$'s as $Y$'s. It follows that the middle entry is smaller than the top entry in every column.

Let $k$, $j$, and $i$ be the labels of $\ast$, the new $X_c$, and the new $Y_c$, respectively. Since $q<s$ and either $s<c$ or $s=c=n+1$, the three inserted symbols occur in the order $\ast,X_c,Y_c$ in the resulting word. Hence $k<j<i$. Since $\ast$ is immediately followed by a $Y$, the entry $k+1$ lies in the top row. Similarly, the new $X_c$ is immediately followed by a $Y$, so $j+1$ lies in the top row. Finally, the new $Y_c$ is either immediately followed by another $Y$ or is the last symbol. Hence either $i+1$ lies in the top row or $i=2n+3$. Therefore, $A\in\mathcal A_{n+1}$.

\medskip
\noindent\emph{Preservation of the decomposition.}
In the row word, the $Y$ immediately following $\ast$ is $Y_q$. If $s<c$, the $Y$ immediately following $X_c$ is $Y_s$; if $s=c=n+1$, it is the newly inserted $Y_c$. Thus, $k+1$ and $j+1$ are the top entries in the $q$-th and $s$-th columns, respectively. Together with the properties of $c$ above, this shows that if $q<s\leq n$, then $A\in\mathcal A_{n+1}^{(0)}$; if $q<n<s=n+1$, then $A\in\mathcal A_{n+1}^{(1)}$; and if $(q,s)=(n,n+1)$, then $A\in\mathcal A_{n+1}^{(2)}$. See Appendix~\ref{app:examples} for one example of each subclass.

\medskip
\noindent\textbf{Inverse of the insertion.}
Let $A\in\mathcal A_{n+1}$, and let $c$ be the index of its three-cell column. In the row word of $A$, the middle and top entries of this column
are represented by $X_c$ and $Y_c$, respectively. Let $q$ be the index of the $Y$ immediately following $\ast$, and let $s$ be the index of the
$Y$ immediately following $X_c$. These indices are well defined by the trident condition and are precisely the column indices of the successors of the bottom and middle entries, respectively. By Lemma~\ref{lem:trident-decomposition}, either $q<s<c$ or
$q<s=c=n+1$.

Delete $\ast$, $X_c$, and $Y_c$, and call the remaining word $P$. Then
$P$ has $n$ occurrences of each letter and is a Dyck word. Indeed, the
row word of $A$ with $\ast$ deleted is a Dyck word since $A$ is a Young
tableau. For a prefix ending between $X_c$ and $Y_c$, at least $c$
occurrences of $X$ and at most $c-1$ occurrences of $Y$ have appeared,
so after $X_c$ is deleted, the prefix still contains at least as many
$X$'s as $Y$'s. Prefixes ending before
$X_c$ are unchanged, while for those ending after $Y_c$, one occurrence
of each letter is deleted.

We now verify that applying the insertion to $(P,q,s)$ reconstructs $A$. Since $q<s\leq c$, deleting $Y_c$ does not change the
index of the $Y$ immediately following $\ast$. If $s<c$, the same is true
for the $Y$ immediately following $X_c$; if $s=c=n+1$, deleting $X_c$
and $Y_c$ leaves the terminal gap $G_{n+1}$. Hence $\ast$ and $X_c$
occupy the gaps $G_q$ and $G_s$, respectively. If $s\leq n$, exactly
$c-1$ occurrences of $X$ precede $G_s$ in $P$, so reinserting $X$ in
$G_s$ gives $X_c$; if $s=n+1$, then $c=n+1$.

Finally, if $c\leq n$, then the top entry of the three-cell column is not
the largest entry, so the trident condition implies that $Y_c$ is
immediately followed by another $Y$, which becomes the $c$-th occurrence
of $Y$ after $Y_c$ is deleted. If $c=n+1$, then $Y_c$ is the final
symbol. Thus $Y_c$ occupies $G_c$, and the insertion reconstructs $A$. Combining this bijection with the Chung--Feller
encoding of the previous subsection gives the following result.

\begin{theorem} \label{thm:main-bijection} For every $n\geq2$, there is a bijection $\mathcal B_n\longrightarrow \mathcal A_{n+1}$ which restricts to bijections $\mathcal B_n^{(e)}\longrightarrow \mathcal A_{n+1}^{(e)}$ for $e=0,1,2$. Consequently, \[ a_{n+1}^{(0)}=\binom{n}{2}C_n,\qquad a_{n+1}^{(1)}=(n-1)C_n,\qquad a_{n+1}^{(2)}=C_n, \] and $a_{n+1}=b_n=\binom{n+1}{2}C_n$. \end{theorem}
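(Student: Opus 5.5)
The plan is to compose the two bijections already built in Section~\ref{sec:bijection}: the Chung--Feller encoding $\mathcal B_n\to\mathcal E_n$ and the insertion $\mathcal E_n\to\mathcal A_{n+1}$.

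\emph{The encoding is a bijection.} First check that the forward encoding lands in $\mathcal E_n$ and that the three cases cover $\mathcal E_n$ disjointly.
\begin{itemize}
\item Case $e=0$ gives $s\le n$.
\item Case $e=1$ gives $s=n+1$ and $q\le n-1$.
\item Case $e=2$ gives $(q,s)=(n,n+1)$.
\end{itemize}
So the images of $\mathcal B_n^{(0)},\mathcal B_n^{(1)},\mathcal B_n^{(2)}$ are the three blocks $\{s\le n\}$, $\{s=n+1,\ q<n\}$ and $\{(q,s)=(n,n+1)\}$, and these partition $\mathcal E_n$.

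Within each block, use the Chung--Feller fact recalled from \cite{Chen2008}: $f$ is a bijection from balanced words with $e$ flaws to those with $e+1$ flaws, for $e<n$. Hence $f^{-e}$ maps words with exactly $e$ flaws bijectively onto $\mathcal D_n$; this needs $n\ge2$ when $e=2$.

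The wall data is then recovered as follows.
\begin{itemize}
\item For $e=0$, the walls are simply the pair $q<s$.
\item For $e=2$, the walls are forced to be the two flaws of $f^2(P)$.
\item For $e=1$, one wall is forced to be the flaw $v$ of $W=f(P)$. The other wall $r\neq v$ ranges over $n-1$ columns. The reindexing $r\mapsto q$, with $q=r$ if $r<v$ and $q=r-1$ if $r>v$, is a bijection onto $\{1,\dots,n-1\}$, with inverse $r=q$ if $q<v$ and $r=q+1$ if $q\ge v$.
\end{itemize}
In every case the given inverse recovers $B$, and conversely applying the inverse and then the forward map returns $(P,q,s)$.

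\emph{The insertion is a bijection.} The Verification paragraph shows that the insertion sends $\mathcal E_n$ into $\mathcal A_{n+1}$. The preservation paragraph shows that the three blocks of $\mathcal E_n$ land in $\mathcal A_{n+1}^{(0)},\mathcal A_{n+1}^{(1)},\mathcal A_{n+1}^{(2)}$ respectively. This uses Lemma~\ref{lem:trident-decomposition}: for example, $q<n<s=n+1$ gives $s=c=n+1$ with $q<n$, which is exactly the defining condition of $\mathcal A_{n+1}^{(1)}$.

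The inverse-of-insertion paragraph shows that deletion produces a valid triple $(P,q,s)\in\mathcal E_n$ and that reinserting reconstructs $A$. It remains to check the other composite: deleting the three inserted symbols from the inserted word returns $P$, and the recomputed $q,s$ agree with the original ones. This follows because the $Y$ immediately after $\ast$ is $Y_q$, and the $Y$ immediately after $X_c$ is $Y_s$, or the terminal gap $G_{n+1}$ when $s=n+1$, as recorded in the preservation paragraph. So the two maps are mutually inverse and the insertion is a bijection respecting the blocks.

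\emph{The main obstacle.} The delicate step is this second-composite check in the boundary case $s=c=n+1$. Here the inserted $Y$ is placed after $X_c$ in the common gap $G_{n+1}$. One must make sure the deletion sees $Y_c$ as the symbol following $X_c$, so that it reports $s=c=n+1$, and does not misread the index. This is a direct inspection of the last two symbols of the word.

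\emph{Conclusion.} Composing the two block-preserving bijections gives $\mathcal B_n^{(e)}\cong\mathcal A_{n+1}^{(e)}$ for $e=0,1,2$. The counts $b_n^{(0)}=\binom n2C_n$, $b_n^{(1)}=(n-1)C_n$ and $b_n^{(2)}=C_n$ from Section~\ref{sec:two-wall-tableaux} then transfer to $a_{n+1}^{(e)}$. Summing gives
\[
a_{n+1}=b_n=\binom{n}{2}C_n+(n-1)C_n+C_n=\binom{n+1}{2}C_n .
\]
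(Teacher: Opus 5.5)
Your proposal is correct and follows essentially the same route as the paper: you compose the block-preserving Chung--Feller encoding $\mathcal B_n\to\mathcal E_n$ with the insertion $\mathcal E_n\to\mathcal A_{n+1}$ and its deletion inverse, then transfer the counts $b_n^{(e)}$. Your explicit check that deletion$\circ$insertion is the identity is a useful addition, since the paper leaves it implicit in its preservation paragraph; one small correction is that Lemma~\ref{lem:trident-decomposition} is needed in the inverse direction, to guarantee $q<s\le c$ with $s=c$ only when $c=n+1$, whereas the forward preservation follows directly from the definitions of the subclasses.
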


For a fixed Dyck word $P$, the factors $\binom{n}{2}$, $n-1$, and $1$ count
the possible pairs $(q,s)$, that is, the column indices of $k+1$ and $j+1$:
in the first case any $q<s\leq n$ may be chosen; in the second,
$s=c=n+1$ and $q<n$ may be chosen; and in the third,
$(q,s)=(n,n+1)$ is forced.

We now translate this enumeration back to tree-child networks using the
correspondence of Section~\ref{sec:network-tableau}.

For $m\geq3$, let $\mathrm{TC}_{m,1}$ be the number of tree-child networks
with $m$ labelled leaves and one reticulation node, and let
$\mathrm{TR}_{m,1}$ be the number of those that contain a trident.

\begin{corollary}\label{cor:network-count}
For $m\geq3$,
\[
\mathrm{TR}_{m,1}=3\binom{m}{3}(2m-5)!!,
\qquad\text{and}\qquad
\frac{\mathrm{TR}_{m,1}}{\mathrm{TC}_{m,1}}\longrightarrow\frac18
\quad\text{as }m\to\infty.
\]
\end{corollary}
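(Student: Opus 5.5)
The plan is to transfer the count $a_n$ of Theorem~\ref{thm:main-bijection} to networks through the correspondence of Section~\ref{sec:network-tableau}. The main step is to control the multiplicity introduced by the choices of free edges.

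\emph{Step 1: the multiplicity is uniform.} Put $m=n+1$. In a binary tree-child network with $m$ leaves and one reticulation node, counting edges gives exactly $m=n+1$ tree nodes. The two parents of the reticulation node are distinct, since the graph is simple. Neither of them is free, and every other tree node is free, because there is only one reticulation node. Hence every such network has exactly $n-1$ free tree nodes and therefore exactly $2^{n-1}$ choice collections. The specialization of \cite[Theorem~3.4]{ChangFuchsLiuWallnerYu2024} recalled in Section~\ref{sec:network-tableau} is a bijection between pairs (network, choice collection) and pairs (tableau in $\mathcal Y_n$, ordering of the $n+1$ leaf labels). It therefore gives
\[
2^{n-1}\,\mathrm{TC}_{m,1}=(n+1)!\,|\mathcal Y_n|.
\]
By Lemma~\ref{lem:trident-network} and the choice-independence remark, a network contains a trident exactly when every tableau associated with it is a trident tableau. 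Restricting the bijection to these pairs gives
\[
2^{n-1}\,\mathrm{TR}_{m,1}=(n+1)!\,a_n.
\]

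\emph{Step 2: exact count.} Substitute $a_n=\binom n2 C_{n-1}$ from Theorem~\ref{thm:main-bijection} together with the remark for $n=2$; this needs $n\ge2$, that is, $m\ge3$. Write $C_{n-1}=(2n-2)!/(n!(n-1)!)$ and use $(2n-2)!=2^{n-1}(n-1)!\,(2n-3)!!$. This gives
\[
\mathrm{TR}_{m,1}=\binom n2 (n+1)(2n-3)!!=\binom{m-1}{2}\,m\,(2m-5)!!=3\binom m3(2m-5)!!.
\]

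\emph{Step 3: the limit.} Here I need $\mathrm{TC}_{m,1}$, equivalently $|\mathcal Y_n|$. I would take the known closed formula for tree-child networks with one reticulation node from \cite{CardonaZhang2020}, or $|\mathcal Y_n|$ from \cite{ChangFuchsLiuWallnerYu2024,LiuWallnerYu2026}, and convert it with the relation of Step~1. I then extract the leading asymptotics in the form
\[
\mathrm{TC}_{m,1}\sim 4m^3(2m-5)!!.
\]
This uses $(2m-1)!!=(2m-1)(2m-3)(2m-5)!!\sim4m^2(2m-5)!!$, and lower-order terms such as $2^{m-1}m!$ are negligible. Since $\mathrm{TR}_{m,1}\sim\tfrac12m^3(2m-5)!!$, the ratio tends to $1/8$.

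The main obstacle is this last step. Steps~1 and~2 follow directly from results already established, but Step~3 requires the exact normalization of the external formula for $\mathrm{TC}_{m,1}$. I would first check that formula against small cases computed from $|\mathcal Y_n|$, for instance $n=2$, before reading off the leading constant, because an error of a factor $2$ there would change the limit.
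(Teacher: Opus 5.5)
Your proposal is correct and follows the paper's proof essentially step for step. It uses the same count of $m-2$ free tree nodes, the same relations $2^{m-2}\mathrm{TC}_{m,1}=m!\,|\mathcal Y_{m-1}|$ and $2^{m-2}\mathrm{TR}_{m,1}=m!\,a_{m-1}$, and the same Cardona--Zhang formula $\mathrm{TC}_{m,1}=m(2m-1)!!-2^{m-1}m!$. Your worry about normalization is unfounded: at $m=3$ the formula gives $21=3!\cdot 7/2$, matching $|\mathcal Y_2|=7$. The negligibility of $2^{m-1}m!$, which you only assert, follows from $2^{m-1}m!/(m(2m-1)!!)=4^m/(2m\binom{2m}{m})=O(m^{-1/2})$, exactly as in the paper.
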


\begin{proof}
By \cite[Lemma~3.6]{ChangFuchsLiuWallnerYu2024}, a tree-child network
with $m$ leaves and one reticulation node has $m-2$ free tree nodes.
Indeed, if $t$ denotes the number of tree nodes, summing the outdegrees
and indegrees gives $1+2t+1=t+m+2$, so $t=m$; of these, exactly the two
parents of the reticulation node are non-free. Hence there are
$2^{m-2}$ choices of one free edge at each free tree node.
By the encoding of Section~\ref{sec:network-tableau},
\[
2^{m-2}\mathrm{TC}_{m,1}=m!\,|\mathcal Y_{m-1}|.
\]
By Lemma~\ref{lem:trident-network} and the choice-independence established
after its proof, the trident condition is preserved for all $2^{m-2}$
choices. Therefore, this correspondence restricts to
\[
2^{m-2}\mathrm{TR}_{m,1}=m!\,a_{m-1}.
\]
Hence,
\[
\mathrm{TR}_{m,1}
=\frac{m!}{2^{m-2}}a_{m-1}
=\frac{m!}{2^{m-2}}\binom{m-1}{2}C_{m-2}
=3\binom{m}{3}(2m-5)!!,
\]
where the second equality follows from
Theorem~\ref{thm:main-bijection}, together with $a_2=1$.

By \cite[Proposition~18]{CardonaZhang2020}, see also
\cite{FuchsGittenbergerMansouri2021},
$\mathrm{TC}_{m,1}=m(2m-1)!!-2^{m-1}m!$. For the second term,
\[
\frac{2^{m-1}m!}{m(2m-1)!!}
=\frac{4^m}{2m\binom{2m}{m}}
=O(m^{-1/2}),
\]
where we use $\binom{2m}{m}\sim4^m/\sqrt{\pi m}$. Hence
\[
\frac{\mathrm{TC}_{m,1}}{m(2m-1)!!}
=1+O(m^{-1/2})
\longrightarrow 1.
\]
From the formula for $\mathrm{TR}_{m,1}$ above,
\[
\frac{\mathrm{TR}_{m,1}}{m(2m-1)!!}
=\frac{(m-1)(m-2)}{2(2m-1)(2m-3)}
=\frac18+O(m^{-1})
\longrightarrow\frac18.
\]
Dividing the two estimates gives
\[
\frac{\mathrm{TR}_{m,1}}{\mathrm{TC}_{m,1}}
=\frac{1/8+O(m^{-1})}{1+O(m^{-1/2})}
=\frac18+O(m^{-1/2})
\longrightarrow\frac18.
\]
\end{proof}

\section{Conclusion}
The bijection in Theorem~\ref{thm:main-bijection} matches the three-part
decompositions of trident tableaux and two-wall tableaux, and leads to the
tree-child network enumeration and asymptotic proportion in
Corollary~\ref{cor:network-count}. A natural direction is to investigate whether the present bijection extends
to Young tableaux with several three-cell columns and several pairs of walls,
and to determine the network structures represented by such wall
configurations.

\section*{Acknowledgements}
The author thanks Dr. Michael Wallner for his valuable comments, and Dr. Momoko Hayamizu for her continued support.

\appendix

\section{Examples of the bijection}
\label{app:examples}

We write $n=3$ in all three examples, so the resulting trident tableaux have four columns. In the tableaux below, the walls are indicated by thick red dashed horizontal lines.

\begin{example}[No decreasing wall columns]
Consider the two-wall tableau

\[
\begin{array}{|c|c|c|}
\hline
3&4&6\\
\noalign{\global\arrayrulewidth=0.9pt}
\arrayrulecolor{red}\cdashline{1-2}[2pt/1.5pt]
\noalign{\global\arrayrulewidth=0.4pt}
\arrayrulecolor{black}\cline{3-3}
1&2&5\\
\hline
\end{array}
\]

Both wall columns are increasing. The associated balanced word is
$W=XXYYXY$, so $P=W$ and $(q,s)=(1,2)$.

The second occurrence of $Y$ in $P$ is preceded by two occurrences of
$X$. Hence the new occurrence of $X$ is $X_3$, and therefore $c=3$.
We insert $\ast$ in $G_1$, the new $X_3$ in $G_2$, and the new $Y_3$
in $G_3$. Thus,
\[
XXYYXY
\longmapsto
XXY\textcolor{red}{X}YXY
\longmapsto
XXY\textcolor{red}{X}YX\textcolor{red}{Y}Y
\longmapsto
XX\textcolor{blue}{\ast}Y\textcolor{red}{X}YX
\textcolor{red}{Y}Y.
\]

After labelling the symbols from left to right, we
obtain
\[
\begin{array}{|c|c|c|c|}
\hline
4&6&8&9\\
\hline
1&2&5&7\\
\hline
\multicolumn{2}{c|}{}&3&\multicolumn{1}{c}{}\\
\cline{3-3}
\end{array}
\]

The entries in the three-cell column are $3<5<8$, and their successors $4$, $6$, and $9$ all lie in the top row. Since $q<s<c$, the resulting trident tableau lies in $\mathcal A_{4}^{(0)}$.

Conversely, in the resulting trident tableau, the $Y$ immediately following $\ast$ is $Y_1$, while the $Y$ immediately following $X_3$ is $Y_2$. Hence $(q,s)=(1,2)$. Deleting $\ast$, $X_3$, and $Y_3$
recovers $P=XXYYXY$. Since $s\leq n$, we have $W=P$ and place the walls in the first and second columns, recovering the original two-wall tableau.
\end{example}

\begin{example}[Exactly one decreasing wall column]
Consider the two-wall tableau

\[
\begin{array}{|c|c|c|}
\hline
2&3&6\\
\arrayrulecolor{black}\cline{1-1}
\noalign{\global\arrayrulewidth=0.9pt}
\arrayrulecolor{red}\cdashline{2-3}[2pt/1.5pt]
\noalign{\global\arrayrulewidth=0.4pt}
\arrayrulecolor{black}
1&4&5\\
\hline
\end{array}
\]

The second column is decreasing, whereas the third column is increasing. The associated balanced word is $W=XYYXXY$.
Applying the inverse Chung--Feller map once gives
$P=f^{-1}(W)=XXYYXY$.

The unique decreasing wall column is the second column, so $v=2$, and
the other wall is in the third column, so $r=3$. After the $v$-th column
is removed, the $r$-th column is the second column in the remaining list. Hence $q=2$ and $(q,s)=(2,4)$.

Since $s=n+1$, we append a new pair $XY$ to $P$ and insert $\ast$ in
$G_2$. Thus,
\[
XXYYXY
\longmapsto
XXYYXY\textcolor{red}{X}\textcolor{red}{Y}
\longmapsto
XXY\textcolor{blue}{\ast}YXY
\textcolor{red}{X}\textcolor{red}{Y}.
\]

After labelling the symbols from left to right, we obtain
\[
\begin{array}{|c|c|c|c|}
\hline
3&5&7&9\\
\hline
1&2&6&8\\
\hline
\multicolumn{3}{c|}{}&4\\
\cline{4-4}
\end{array}
\]

The entries in the three-cell column are $4<8<9$. The successors of
the bottom and middle entries are $5$ and $9$, respectively, both of
which lie in the top row, while $9$ is the largest entry. Since $s=c=4$ and $q<3$, the resulting trident tableau lies in $\mathcal A_{4}^{(1)}$.

Conversely, since $s=4=n+1$ and $q<3$, we apply $f$ once and recover
$W=f(P)=XYYXXY$. Its unique decreasing column is the second column, and
since $q=2$, the second column in the remaining list $(1,3)$ is the third column, which is therefore the other wall column.
\end{example}

\begin{example}[Exactly two decreasing wall columns]
Consider the two-wall tableau

\[
\begin{array}{|c|c|c|}
\hline
1&4&5\\
\noalign{\global\arrayrulewidth=0.9pt}
\arrayrulecolor{red}\cdashline{1-1}[2pt/1.5pt]
\noalign{\global\arrayrulewidth=0.4pt}
\arrayrulecolor{black}\cline{2-2}
\noalign{\global\arrayrulewidth=0.9pt}
\arrayrulecolor{red}\cdashline{3-3}[2pt/1.5pt]
\noalign{\global\arrayrulewidth=0.4pt}
\arrayrulecolor{black}
2&3&6\\
\hline
\end{array}
\]

Both wall columns are decreasing. The associated balanced word is
$W=YXXYYX$. Applying the inverse Chung--Feller map twice gives
$P=f^{-2}(W)=XXYXYY$. Since the two wall columns are already
determined by the two decreasing columns, we set $(q,s)=(3,4)$.

Since $s=n+1$, we append a new pair $XY$ to $P$ and insert $\ast$ in
$G_3$. Thus,
\[
XXYXYY
\longmapsto
XXYXYY\textcolor{red}{X}\textcolor{red}{Y}
\longmapsto
XXYXY\textcolor{blue}{\ast}Y
\textcolor{red}{X}\textcolor{red}{Y}.
\]

After labelling the symbols from left to right, we obtain
\[
\begin{array}{|c|c|c|c|}
\hline
3&5&7&9\\
\hline
1&2&4&8\\
\hline
\multicolumn{3}{c|}{}&6\\
\cline{4-4}
\end{array}
\]

The entries in the three-cell column are $6<8<9$. The successors of
the bottom and middle entries are $7$ and $9$, respectively, both of
which lie in the top row, while $9$ is the largest entry. Since $q=3$ and $s=c=4$, the resulting trident tableau lies in $\mathcal A_{4}^{(2)}$.

Conversely, the pair $(q,s)=(3,4)$ indicates that the original tableau
has two decreasing wall columns. Applying $f$ twice gives
$W=f^2(P)=YXXYYX$, whose decreasing columns are the first and third columns.
These are therefore the two wall columns.
\end{example}

\bibliographystyle{amsplain}
\bibliography{references}
\end{document}